\newtheorem{theorem}{Theorem}[section]
\newtheorem{lemma}{Lemma}[section]
\newtheorem{proposition}{Proposition}[section]
\numberwithin{equation}{section}
\begin{document}

\title[New real variable methods in $H$ summability of Fourier series]{New real variable methods in $H$ summability of Fourier series.}

\author{Calixto P. Calder\'on}
\address{Department of Mathematics, University of Illinois at Chicago, IL, 60607, USA.}
\email{[Calixto P. Calder\'on]cpc@uic.edu}
\author{A. Susana Cor\'e}
\address{Department of Mathematics, Illinois State University, Normal IL, 61761, USA.}
\email{[A. Susana Cor\'e] bacore@ilstu.edu}
\author{Wilfredo O. Urbina}
\address{
 Department of Mathematical and Actuarial Sciences, Roosevelt University  Chicago, IL, 60605, USA.}
\email{[Wilfredo Urbina]wurbinaromero@roosevelt.edu}
\thanks{\emph{2000 Mathematics Subject Classification} Primary 42C10; Secondary 26A24}
\thanks{\emph{Key words and phrases:} Fourier series, strong summability, Marcinkiewicz function, $A_1$-weights.}

\begin{abstract} In this paper we shall be concerned with $H_\alpha$ summability, for  $0  < \alpha \leq 2$ of the Fourier series of arbitrary $L^1([-\pi,\pi]) $ functions. The method to be employed is a refinement of the real variable method introduced by Marcinkiewicz in \cite{Marcin1}. 

\end{abstract}
\maketitle
\begin{center}
{\em Dedicated to the memory of  A. Eduardo Gatto}
\end{center}
\section{Introduction}
Let $f$ be a function in $L^1([-\pi,\pi])$, denote by $S_n(f,\cdot)$ the partial sum of order $n$ of the Fourier series of $f$,
\begin{equation}
S_n(f,x) = \sum_{|k| \leq n} c_n e^{-inx}, \quad x \in [-\pi,\pi].
\end{equation}

We say that $f$ is $H_2$ summable at $x$ if there exists a number $s$ such that,
$$ \frac{ 1 }{ n } \sum_{k=1}^n |S_k(f,x) - f(x) |^2
         \longrightarrow 0   \quad \text{a.e.}$$
This can be extended easily to $\alpha>0$; i.e.  we say that its Fourier series is $H_\alpha$ summable to some $f(x)$ or that it is a strongly $\alpha$-summable to sum $f(x)$,  if\begin{equation}
  \frac{ 1 }{ n } \sum_{k=1}^n |S_k(f,x) - f(x) |^\alpha 
         \longrightarrow 0   \quad \text{a.e.}
\end{equation}
Historically the problem goes back to H. Hardy and J. H. Littlewood in \cite{Hardy-Little1}. There the problem is restricted to $H_2$ summability  of $L^2([-\pi,\pi]) $ functions ( i.e.  $\alpha = 2$ and 
  $ f \in L^2([-\pi,\pi]) $, see also  T. Carleman \cite{carle}.\\
  
  In 1935 Hardy and Littlewood proved the case $ \alpha > 0$ and 
  $  f \in L^p, \text{ with }, \;1 < p < \infty$  and posed the problem of whether ``any arbitrary periodic function in $L^1([-\pi,\pi])$  is $H_2$ summable a. e. in $[-\pi,\pi]$.  The answer to this question came only on January of 1939, when  J.  Marcinkiewicz  presented his remarkable result \cite{Marcin1}, developing a real variable method to establish it.\\
  
  Finally the case of $H_\alpha$ summability a. e. for $\alpha> 2$ and $ f \in L^1([-\pi,\pi]) $
 was proved by A. Zygmund in 1941, \cite{zy0} using complex methods.  In view of the negative results concerning convergence a.e. of the Fourier series of functions in $L^1([-\pi,\pi]) $ the $H_\alpha$ summability acquires a special meaning. \\
 
  In this paper we shall be concerned with $H_\alpha$ summability, for  $0  < \alpha \leq 2$ of Fourier series for arbitrary $L^1([-\pi,\pi]) $ functions. The methods used here are a refinement of the real variable method by Marcinkieicz in \cite{Marcin1}, and could be applied also to the case $\alpha >2$. Nevertheless this requires a modification of the Marcinkiewicz function and a change of kernel function (to bee defined later).\\
  
 \section{Preliminaries}
  Consider the following  maximal operator
 \begin{eqnarray}
 (\sigma_\alpha^\ast f)(x) &=& \sup_{n>0} 
                   \left[ \frac{ |S_1(f,x)|^\alpha 
                           + |S_2(f,x) |^\alpha + \ldots + |S_n(f,x) |^\alpha}{n } \right]^{1/\alpha}\\
\nonumber                           &=&  \sup_{n} \sqrt[\alpha]{  \frac{1}{n}\sum_{k=1}^n |  S_k(f,x) |^\alpha},    
\end{eqnarray}
where, as before, $S_k(f,\cdot)$  stands for the $k-$th partial sum of the Fourier series of $ f \in L^1([-\pi,\pi]) $ and $0 < \alpha \leq 2.$\\

Also, let us consider for $f \in L^1([-\pi,\pi])$ the non-centered Hardy-Littlewood function, namely,
\begin{equation}
                 f^\ast (x) = \sup_{\substack{
                                    I \supset \{x\} }
                                   }
                                   \frac{ 1 }{ |I| } 
                                   \int_I
                                  |f(y)| \ dy
         \end{equation}
where $I$ is taken to be an open interval, containing $x$. Observe that the set
$$
   F =   \Big\{ x \, : \, f^\ast (x) \leq \lambda \Big\}  $$
 is a closed set, and  the set
 $$  G =   \Big\{ x \, : \, f^\ast (x)  >   \lambda \Big\},$$
  is an open set. \\
  
  The class $A_1$ of weights is defined using the non-centered Hardy-Littlewood function, $f^\ast$,we say 
$\omega \in A_1$ if the inequality 
\begin{equation}
\omega(\Big\{ x \, : \, f^\ast (x) > \lambda \Big\}  ) \leq \frac{C}{\lambda}  \int_{-\pi}^\pi  |f(x)| \omega(x) \ dx,
\end{equation}
holds true for any $f \in L^1([-\pi,\pi])$,   $f \geq 0$  $C$ depends only on $\omega$.

A well known result gives a characterization of the weights in the case $(-\infty, \infty)$, see Stein \cite{stw};
a positive weight  $w \geq 0$ belongs to the class $A_1$ if and only if 
 \begin{equation}
\omega^* (x) \leq C \omega(x).\\
\end{equation}

In order to prove  the problem of $H_2$ summability for $L^1$ functions  Marcinkiewicz proved that
$\sigma_2^*$ is finite a.e. and he refined that to $H_2$ summability. Moreover, it can be proved, see Stein \cite{st0},
\begin{equation}
|\big\{x: (\sigma_2^\ast f)(x) > \lambda \big\} | \leq \frac{C}{\lambda} \|f\|_1,
\end{equation}
$C$ depends only on $\omega$.

  In order to tackle that problem Marcinkiewicz  introduced the so called {\em Marcinkiewicz function}. If $F$ is a perfect set and $G=F^c$ its complement,  if $d(x,F)= \inf_{z \in F} |x-z|,$ denotes the distance from $x$ to $F$; then he defined,
  \begin{equation}\label{MarcinFunct}
\mathcal{F}(x) = \int_G \frac{1}{(x-y)^2} d(x, F) \,dy
\end{equation}
  which is finite a.e.  for $x \in F$. 
  
  The function $\mathcal{F}(x)$ has important implications in the $L^1$ theory of singular integrals. In particular, in 1966,  L. Carleson in his famous $L^2$ theorem uses a variation of this function, Carleson function is denoted as $\Delta$ in his article. Also Zygmund shows the realtion between $\Delta(x)$ and $\mathcal{F}(x)$, see \cite{zy1}.
  
By Kolmogorov's counterexample in 1926, we know that there exists a function $f \in L^1([-\pi,\pi])$ such that $S_n(f, \cdot)$ diverges a.e., thus the maximal function,
 $$ f^{\ast \ast} (x)= \sup_n |S_n(f,x)|$$ 
 can not be weak type $(1,1)$, see Zygmund \cite{zy2}, Vol II.
 
  Now, consider firstly functions supported on an interval of length  $\pi/8$  , centered  at the origin. For those functions the maximal function $\sigma_\alpha^\ast f$ satisfies the inequality
    $$( \sigma_\alpha^\ast f)(x) \leq C f^\ast(x), $$
  for $x$ such that $d(0,x)\geq \pi/4$, where $C$ is a constant depending on $\pi$ only. The above inequality is consequence of
  $$ \int_{-\pi}^\pi |f(t)| dt \leq 2 \pi f^\ast(x),$$
  and the estimate 
  $$ \big|\frac{\sin(n+1/2)u}{\sin u/2}\big| \leq \frac{1}{|u|} +\frac{1}{2},$$ 
  see A. Zygmund \cite{zy2}, Vol I pages 50--51.
   In what follows we shall introduce a majorization of the kernel used by Marcinkiewicz in \cite{Marcin1}. 
   
 Returning to the case of a general function $f$, such such a function can be decomposed into a sum of pieces, each supported on an interval of length $\pi/8$. By a shift each one of the pieces is moved to the origin, thus each piece can be studied as if it were supported on an interval of length $\pi/8$ centered at the origin.\\
  
  The method to be employed in this paper is a refinement of the real variable method introduced by Marcinkiewicz in \cite{Marcin1}. 
  
  \section{Main results}
  
  The main results obtained in this paper are the following,

  \begin{theorem} Given $0  < \alpha \leq 2$, $ f \in L^1([-\pi,\pi]) $  and $\sigma_\alpha^\ast f$ as above, then
   \begin{equation}
                      | \Big\{ x \, : \, (\sigma_\alpha^\ast f)(x) > \lambda \Big\}| \leq   \frac{ C_\alpha }{\lambda  }  \|f\|_1,
\end{equation}
where $C_\alpha$ is a constant that depends  only on the $\alpha$ (but not on $f$).

 Morevoer,   if $\omega$ is an $A_1$-weight, then
 \begin{equation}
   \omega \Big(  |\big\{x:  (\sigma_\alpha^\ast f)(x) > \lambda \big\}| \Big) \leq 
           \frac{ C_\omega }{ \lambda }   \int_{-\pi}^\pi  |f(x)| \omega(x) \ dx,
\end{equation}
 $C_\omega$ is a constant that depends  only on the weight (but not on $f$).

\end{theorem}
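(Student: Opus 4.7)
The plan is to reduce to $\alpha=2$ by a convexity argument and then prove the weighted weak-$(1,1)$ estimate for $\sigma_2^*$ via a Calder\'on--Zygmund decomposition whose bad part is controlled by the Marcinkiewicz function $\mathcal F$ and the $A_1$ hypothesis. The map $t\mapsto t^{\alpha/2}$ is concave on $[0,\infty)$ for $0<\alpha\le 2$, so Jensen's inequality applied to the uniform probability measure on $\{1,\ldots,n\}$ yields
\[
\Bigl(\frac{1}{n}\sum_{k=1}^n |S_k(f,x)|^\alpha\Bigr)^{1/\alpha}
\le
\Bigl(\frac{1}{n}\sum_{k=1}^n |S_k(f,x)|^2\Bigr)^{1/2},
\]
and hence $\sigma_\alpha^* f\le\sigma_2^* f$ pointwise. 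The first inequality in the theorem therefore follows from (2.5), and it remains to establish the weighted estimate for $\sigma_2^*$.

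For the weighted bound, assume $f\ge 0$ and fix $\lambda>0$. Write $G=\{f^*>\lambda\}=\bigsqcup_j I_j$ as a disjoint union of its open components, and decompose $f=g+b$ with $b=\sum_j b_j$, $\mathrm{supp}\,b_j\subset I_j$, $\int b_j=0$, $\|b_j\|_1\le C\lambda |I_j|$, and $|g|\le C\lambda$ a.e. The $A_1$ property immediately controls the exceptional set,
\[
\omega(G)\le\frac{C_\omega}{\lambda}\int |f|\,\omega,
\]
so only $\omega\bigl(\{x\in F:\sigma_2^* f(x)>2\lambda\}\bigr)$ with $F=G^c$ needs attention. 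The good part is dispatched by the weighted $L^2$ boundedness of $\sigma_2^*$ (available since $A_1\subset A_2$) combined with $|g|^2\le C\lambda |g|$ and Chebyshev.

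For the bad part I would use the reduction to pieces supported on an interval of length $\pi/8$ described in the preliminaries together with the Dirichlet kernel majorization $|\sin((k+\tfrac12)u)/\sin(u/2)|\le|u|^{-1}+\tfrac12$ and the cancellation $\int b_j=0$ to obtain, for each $x\in F$, uniform-in-$k$ pointwise bounds of the form
\[
|S_k(b_j,x)|\le C\lambda\,\frac{|I_j|^2}{d(x,I_j)^2}.
\]
Summing over $j$ by Minkowski and comparing the resulting geometric series with the Marcinkiewicz function $\mathcal F(x)$ associated to $F$ then yields the pointwise estimate $\sigma_2^*(b)(x)\le C\lambda^{1/2}\mathcal F(x)^{1/2}$ on $F$.

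The step I expect to be most delicate is the $A_1$-weighted integral estimate
\[
\int_F \mathcal F(x)\,\omega(x)\,dx\le C_\omega\,\omega(G),
\]
which, combined with Chebyshev and the bound on $\omega(G)$, closes the argument. The unweighted version $\int_F\mathcal F\le C|G|$ is a Fubini calculation based on $\int_F dx/(x-y)^2\le C/d(y,F)$ for $y\in G$; to put in the weight, I plan to decompose $F$ dyadically according to distance from $y$ and invoke the pointwise inequality $\omega^*(y)\le C_\omega\omega(y)$ on each dyadic annulus, so that the Lebesgue measure of each piece is absorbed into a single factor of $\omega(y)$. Integrating the resulting bound against $d(y,F)$ over $G$ then produces $C_\omega\omega(G)$. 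This is the only point where the $A_1$ hypothesis enters substantively; the rest of the proof is geometric and follows Marcinkiewicz's original scheme.
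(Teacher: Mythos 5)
Your reduction of $0<\alpha\le 2$ to $\alpha=2$ by the power-mean inequality is correct, and your plan for the $A_1$-weighted estimate of the Marcinkiewicz function is broadly consistent with the paper's Proposition on $\int\mathcal F\,d\mu$. The fatal problem is the bad part. The claimed uniform-in-$k$ bound
\[
|S_k(b_j,x)|\le C\lambda\,\frac{|I_j|^2}{d(x,I_j)^2},\qquad x\in F,
\]
is false. The cancellation $\int b_j=0$ gives $S_k(b_j,x)=\frac1\pi\int_{I_j}\bigl[D_k(x-y)-D_k(x-c_j)\bigr]b_j(y)\,dy$, but $D_k'(u)$ contains the term $(k+\tfrac12)\cos((k+\tfrac12)u)/(2\sin(u/2))$, of size $k/d(x,I_j)$, so the mean value theorem only yields $|S_k(b_j,x)|\le C\lambda|I_j|^2\bigl(k/d+1/d^2\bigr)$, which grows with $k$. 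Concretely, for $b_j$ close to $\lambda|I_j|(\delta_{y_1}-\delta_{y_2})$ with $y_1,y_2\in I_j$ and $k\gtrsim 1/|I_j|$, $|S_k(b_j,x)|$ is of order $\lambda|I_j|/d$, not $\lambda|I_j|^2/d^2$. Worse, if such a $k$-uniform bound held, then together with an $L^2$ bound for the good part it would give weak-type control on $L^1$ of the Carleson maximal operator $\sup_k|S_k(f,x)|$, which the paper itself rules out by citing Kolmogorov's example. The average over $k$ in $\sigma_2^*$ is not cosmetic; it is the entire point, and no estimate of the individual partial sums $S_k(b_j,x)$ can substitute for it. (A secondary concern: invoking weighted $L^2$ boundedness of $\sigma_2^*$ for the good part is a heavy input, essentially a weighted Hardy--Littlewood/Carleson-type theorem, and should not be treated as free.)

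The paper keeps the average intact instead. It dominates $\frac1n\sum_{k\le n}|S_k(f,x)|^2$ by the Abel mean $(1-r)\sum_\nu r^\nu|S_\nu(f,x)|^2$ with $r=1-1/n$, writes this as a double integral of $f(x+u)f(x+v)$ against the explicit kernel $D(r,x,y)=\sum_\nu r^\nu D_\nu(x)D_\nu(y)$, which has a closed form and is majorized by a product of two Poisson-type kernels $(1-r)\big/\bigl\{[(1-r)^2+c(u-v)^2][(1-r)^2+c(u+v)^2]\bigr\}$. Only then does it apply the Whitney-type decomposition $f=\sum_k f_k$ with $\frac{1}{|I_k|}\int_{I_k}f\le C\lambda$ and split the double sum into adjacent and non-adjacent pairs $(I_i,I_j)$: the non-adjacent interactions are bounded by $C\lambda^2$ and the adjacent ones by $C\lambda\,\mathcal F(x)$, after which the weak $(1,1)$ and $A_1$ estimates for $\mathcal F$ finish the proof much as you outline. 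To salvage a Calder\'on--Zygmund-style argument you would have to estimate the averaged quantity $\frac1n\sum_{k\le n}|S_k(b,x)|^2$ (or its Abel analogue) directly as a bilinear expression, rather than each $S_k(b_j,x)$ separately.
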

 As a particular case, if  $ \omega (x) \equiv 1$ we have the Lebesgue's measure case.\\

  The following result was given by J. Marcinkiewicz in \cite{Marcin1} (see Lemma 3),
  
  \begin{lemma}
             Let $P$ be a perfect set
             (i.e., a set without isolated points), 
             $\Delta_\nu$ a sequence of contiguous segments,
             $\varphi (x)$ the function of period $2\pi$ equal to zero
             in $P$ and to $|\Delta_\nu|$
             for $x\in \Delta_\nu$. 
             We have  almost everywhere in $P$
             \begin{equation}
             \int_{-\pi}^{\pi} \frac{ \varphi (t+x) }{ t^2 }  \ dt < \infty.
             \end{equation}
           \end{lemma}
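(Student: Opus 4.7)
The plan is to convert the integral into an explicit sum indexed by the contiguous segments $\Delta_\nu$ and then to establish its $L^q$-integrability on $P$ for some $q\in(1/2,1)$, which will force finiteness a.e.

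First, for $x\in P$ outside the countable set $\{a_\nu,b_\nu\}$ of segment endpoints (so $x\notin\overline{\Delta_\nu}$ for every $\nu$), I would change variables $u=t+x$ in the integral and use that $\varphi\equiv 0$ on $P$ to obtain
\[
I(x):=\int_{-\pi}^{\pi}\frac{\varphi(t+x)}{t^{2}}\,dt
=\sum_{\nu}|\Delta_{\nu}|\int_{\Delta_{\nu}}\frac{du}{(u-x)^{2}}
=\sum_{\nu}\frac{|\Delta_{\nu}|^{2}}{|a_{\nu}-x|\,|b_{\nu}-x|},
\]
where $\Delta_{\nu}=(a_{\nu},b_{\nu})$ and the last equality uses $x\notin[a_\nu,b_\nu]$. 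The lemma thus reduces to a.e.\ finiteness of this non-negative sum on $P$.

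A direct Fubini against Lebesgue measure on $P$ will not work: a single summand's integral $\int_{P}dx/(|a_{\nu}-x||b_{\nu}-x|)$ diverges logarithmically near any endpoint $a_\nu$ or $b_\nu$ where $P$ has positive upper density. The remedy is to raise $I$ to a sub-unit power. Fix $q\in(1/2,1)$; by subadditivity of $r\mapsto r^{q}$,
\[
I(x)^{q}\leq\sum_{\nu}\frac{|\Delta_{\nu}|^{2q}}{(|a_{\nu}-x|\,|b_{\nu}-x|)^{q}},
\]
and for each $\nu$, a scaling computation (set $s=a_{\nu}-x$ for $x<a_{\nu}$, plus the symmetric contribution on the right of $\Delta_{\nu}$) yields
\[
\int_{P}\frac{dx}{(|a_{\nu}-x|\,|b_{\nu}-x|)^{q}}
\leq 2\int_{0}^{\infty}\frac{ds}{s^{q}(s+|\Delta_{\nu}|)^{q}}
=C_{q}\,|\Delta_{\nu}|^{1-2q},
\]
with $C_{q}=2\int_{0}^{\infty}u^{-q}(1+u)^{-q}\,du<\infty$ precisely because $q<1$ (integrability at $s=0$) and $2q>1$ (integrability at $s=\infty$). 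Multiplying by $|\Delta_{\nu}|^{2q}$ and summing over $\nu$,
\[
\int_{P}I(x)^{q}\,dx\leq C_{q}\sum_{\nu}|\Delta_{\nu}|\leq 2\pi\,C_{q}<\infty,
\]
so $I^{q}\in L^{1}(P)$, whence $I<\infty$ a.e.\ on $P$.

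The main obstacle is recognizing that, since $|\Delta_{\nu}|$ is a coarser weight than the distance $d(y,P)$ appearing in the classical Marcinkiewicz function $\mathcal{F}$, the standard $L^{1}(P)$-Fubini argument that handles $\mathcal{F}$ breaks down here; the exponent range $q\in(1/2,1)$ is exactly the sweet spot that simultaneously tames the endpoint singularity ($q<1$) and the scale-invariant tail ($2q>1$), and I would expect any attempted streamlining to collapse back to one of these two failure modes.
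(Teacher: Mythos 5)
Your proof is correct, and it takes a genuinely different route from the one the paper develops. You keep the original contiguous intervals $\Delta_\nu$, compute the integral exactly as $\sum_\nu |\Delta_\nu|^2\big(|a_\nu-x|\,|b_\nu-x|\big)^{-1}$, and kill the non-integrable endpoint singularity by integrating a fractional power $q\in(1/2,1)$ over $P$ --- a device close in spirit to Marcinkiewicz's own argument. The paper's announced ``alternative proof'' goes the other way: it refines each $\Delta_\nu$ by a one-dimensional Whitney-type decomposition into pieces $I_k$ with $d(I_k,F)=|I_k|$ (Lemma 3.2), replaces the weight $|\Delta_\nu|$ by $|I_k|$, so that the resulting $\phi=\sum_k c\,|I_k|\chi_{I_k}$ is comparable to $d(y,P)$, and then a plain $L^1$--Fubini over the complement $F'$ of the slightly dilated intervals gives $\int_{F'}\!\int (x-y)^{-2}\phi(y)\,dy\,dx\le C|G|$, followed by $\varepsilon\to0$. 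The trade-off is worth noting: your argument proves the lemma exactly as stated, with the weight constant equal to $|\Delta_\nu|$ all the way to the endpoints, and is self-contained; the paper's Whitney route is softer and is the form actually used downstream (weak $(1,1)$ and $A_1$-weighted bounds for $\mathcal{F}$), but since $\phi\le C\varphi$ it directly controls only the smaller, Whitney-damped weight --- the $L^1$-Fubini it rests on genuinely diverges if one keeps $|\Delta_\nu|$ on all of $\Delta_\nu$ (each $\Delta_\nu$ contains infinitely many $I_k$, so the relevant sum becomes $\sum_k|\Delta_{\nu(k)}|=\infty$). That is precisely the failure mode you diagnosed, and your exponent window $1/2<q<1$ (strictly: $q<1$ for the endpoint, $2q>1$ for the tail) is the correct repair; note that the borderline $q=1/2$ would leave a logarithm that $\sum_\nu|\Delta_\nu|<\infty$ alone does not absorb, so your strict inequalities are not cosmetic.
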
 
 
For the proof  see \cite{Marcin1}. We will give an alternative proof of this result.
 
\subsection{Whitney type decomposition.} 

We start considering an special type of covering for open sets in $\mathbb{R}$ which has the same type of building principle than the Whitney decomposition in  $\mathbb{R}^n$, see Stein \cite{st1}, 
\begin{lemma}\label{lemacub}
  Let $G$ be an open set, and consider its decomposition into open disjoint intervals  $\big\{ J_k\big\} $  ( i.e. $G$ can be written as  $ G = \bigcup_k J_k$, and $J_k \cap J_l = \emptyset$ for $k \neq l$) such that $J_k$ are its connected components and the end points of $J_k$ are in $F= G^c $. Then it is possible to find a countable refinement   $\big\{ I_j\big\} $ such that, 
         \begin{enumerate}
\item [i)]   whenever $I_j$ and $I_{j'}$ are not adjacent, i.e. 
         $ \bar{I}_j \cap \bar{I}_{j'}  = \emptyset $
         then, for a suitable $C$

     \begin{equation*}
         \begin{split}
                 d( I_j , I_{j'}) & \ge C |I_j| \;\mbox{and}\\
                       d( I_j , I_{j'})  & \ge C |I_{j'}|.
         \end{split}
         \end{equation*}
         $C$ can be chosen to be bigger or equal to $1/2$.\\
  \item[ii)] For any $j$, $d(I_j,F) = |I_j|.$
\end{enumerate}

 \end{lemma}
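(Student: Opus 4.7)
My plan is to construct an explicit geometric-type Whitney decomposition inside each connected component of $G$, and then to verify (i) and (ii) by elementary distance computations.

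First, I would write $G = \bigsqcup_k J_k$ as in the statement, with each $J_k = (a_k, b_k)$ bounded (recall $G \subseteq [-\pi,\pi]$) and put $\ell_k := b_k - a_k$. For each component I would take the two symmetric geometric families
\[
I_m^{L,k} = \left(a_k + \tfrac{\ell_k}{2^{m+2}},\; a_k + \tfrac{\ell_k}{2^{m+1}}\right), \qquad I_m^{R,k} = \left(b_k - \tfrac{\ell_k}{2^{m+1}},\; b_k - \tfrac{\ell_k}{2^{m+2}}\right), \quad m \geq 0,
\]
and let $\{I_j\}$ be the (countable) reindexing of the union of all of these intervals. By construction each $I_j$ lies inside some $J_k$, so $\{I_j\}$ is a refinement of $\{J_k\}$, and $\bigcup_j I_j$ recovers $G$ up to a countable set.

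To check (ii), I would observe that since $J_k$ is a connected component of $G$, for any $x \in J_k$ the only candidates for the nearest point of $F$ are the two endpoints $a_k$ and $b_k$, because every other element of $F$ lies outside $J_k$ and hence beyond one of these endpoints. A direct calculation then gives $d(I_m^{L,k}, a_k) = \ell_k/2^{m+2} = |I_m^{L,k}|$, while $d(I_m^{L,k}, b_k) \geq \ell_k/2 \geq |I_m^{L,k}|$, so $d(I_m^{L,k}, F) = |I_m^{L,k}|$; the right-hand pieces are symmetric. For (i), two cases occur. If $I_j \subset J_k$ and $I_{j'} \subset J_{k'}$ with $k \neq k'$, then any segment from $I_j$ to $I_{j'}$ must pass through $F$, and applying (ii) on both sides yields the stronger bound $d(I_j, I_{j'}) \geq d(I_j, F) + d(I_{j'}, F) = |I_j| + |I_{j'}|$. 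If instead $I_j, I_{j'}$ lie in the same $J_k$, a direct computation on the geometric progression dispatches all the sub-cases; the extremal one is $I_m^{L,k}$ against $I_{m+2}^{L,k}$, where the gap equals $\ell_k/2^{m+3} = \tfrac{1}{2}|I_m^{L,k}|$, and every other non-adjacent pair yields a larger ratio. Hence $C = \tfrac{1}{2}$ works throughout.

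The only subtle point I anticipate is that the statement asks for the \emph{exact} equality $d(I_j, F) = |I_j|$ rather than the usual comparability $d(I_j, F) \asymp |I_j|$ that comes out of a standard dyadic grid; this forces the particular choice of geometric progression with ratio $1/2$ above, in which the length of each piece is rigged to equal precisely its distance to the nearest endpoint of the hosting component. Once that calibration is chosen, the rest of the verification reduces to elementary arithmetic on $\mathbb{R}$.
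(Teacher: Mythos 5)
Your construction is correct and follows essentially the same route as the paper: both subdivide each component $J_k$ into a geometric family of intervals accumulating at its two endpoints, calibrated so that each piece's length equals exactly its distance to $F$, and both extract $C=1/2$ from the worst non-adjacent pair within one component. The only differences are cosmetic — the paper first takes a closed central third and then successively halves the flanks (lengths $\ell_k/3,\ \ell_k/6,\ \ell_k/12,\dots$, with half-open pieces so the union is exactly $J_k$), whereas your purely dyadic families (lengths $\ell_k/4,\ \ell_k/8,\dots$) meet at the midpoint and omit a countable, hence null, set, which is harmless for every use made of the lemma; your verification of (i), including the cross-component bound $d(I_j,I_{j'})\ge |I_j|+|I_{j'}|$, is if anything more explicit than the paper's.
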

 \begin{proof}
 
         We have
    $ G = \bigcup_k J_k  = \bigcup_k (a_k, b_k)$ with $a_k, b_k \in F=G^c$.
    For each fixed $k$ we do the following:
    \begin{enumerate}[leftmargin=1.7cm, rightmargin=1cm, label=\textbullet ]
            \item 
                    We decompose $J_k$ into three subintervals
                    $J_{k,1}, \ J_{k,2} , \ J_{k,3}$
                    having equal length, thus
                    \begin{equation*}
                            J_k = \bigcup_{i=1}^3 J_{k,i} \quad \text{and} \quad
                            d ( J_{k,i}, F)  = |J_{k,i}|
                    \end{equation*}
                    $J_{k,2}$ is selected to be a closed interval and $J_{k,1} , \ J_{k,3}$ are open intervals.
            \item
                    $J_{k,2}$, the central subinterval,
                    will be an element of the new refinement $\big\{ I_j\big\} $.
            \item
                    Each of the side open intervals, 
                    $J_{k,1}$ and $J_{k,3}$
                    are broken up into two subintervals of the same length such that, the ones that are adjacent to the central interval $J_{k,2}$, are taken such that the left one $J_{k,1,2}$ is closed on the left and open on the right and the right one $J_{k,3,1}$ is open on the left and closed to the right and therefore
                    \begin{equation*}
                    \begin{split}
                            d( J_{k,1,2}, F ) &= | J_{k,1,i} |
                                         \qquad i=1, 2\\
                            d ( J_{k,3,1}, F ) &= | J_{k,3,i} |.
                                         \qquad i=1, 2
                    \end{split}
                    \end{equation*}
                    
\setlength{\unitlength}{4144sp}%
\begingroup\makeatletter\ifx\SetFigFont\undefined%
\gdef\SetFigFont#1#2#3#4#5{%
  \reset@font\fontsize{#1}{#2pt}%
  \fontfamily{#3}\fontseries{#4}\fontshape{#5}%
  \selectfont}%
\fi\endgroup%
\begin{picture}(3849,780)(-11,-391)
{\color[rgb]{0,0,0}\thinlines
}%
{\color[rgb]{0,0,0}\put(901, 74){\line( 0,-1){270}}
}%
{\color[rgb]{0,0,0}\put(1801, 74){\line( 0,-1){270}}
}%
{\color[rgb]{0,0,0}\put(2701, 74){\line( 0,-1){270}}
}%
{\color[rgb]{0,0,0}\put(901,-106){\line( 1, 0){900}}
}%
{\color[rgb]{0,0,0}\put(901,-16){\line( 1, 0){1800}}
}%
{\color[rgb]{0,0,0}\put(  1,-61){\line( 1, 0){3825}}
}%
\put(1261,-376){\makebox(0,0)[lb]{\smash{{\SetFigFont{14}{16.8}{\familydefault}{\mddefault}{\updefault}{\color[rgb]{0,0,0}$|J_{k,1,i}|$}%
}}}}
\put(2031,-376){\makebox(0,0)[lb]{\smash{{\SetFigFont{14}{16.8}{\familydefault}{\mddefault}{\updefault}{\color[rgb]{0,0,0}$|J_{k,2,i}|$}%
}}}}
\put(1660,109){\makebox(0,0)[lb]{\smash{{\SetFigFont{14}{16.8}{\familydefault}{\mddefault}{\updefault}{\color[rgb]{0,0,0}$ |J_{k,i}|$}%
}}}}
\end{picture}%

    The intervals,  $J_{k,1,2}$ and $J_{k,3,1}$ 
    will be part of the new refinement $\big\{ I_j\big\} $.
    
    \item
             As before, the remaining side open intervals  $J_{k,1,1}$ and $J_{k,3,2}$ are broken up into two subintervals of the same length, the one that is adjacent to the interval $J_{k,1,2}$, is taken such that is closed on the left and open on the right and the one that is adjacent to the interval $J_{k,3,1}$ is open on the left and closed to the right and they  will be part of the new refinement $\big\{ I_j\big\} $.
             
   \item Iterating this argument  over and over again and doing the same process for each $J_k$ of the original decomposition of $G$ we obtain  a sequence of intervals $\big\{ I_j\big\} $,
             such that $ d(I_j, F) = |I_j| $.
    \end{enumerate}
    It is important to  note that  if  $ {I}_j$ and ${I}_{j'}$ are not adjacent, i.e. 
         $ \bar{I}_j \cap \bar{I}_{j'}  = \emptyset ,$
         then there will be among them
         at least one subinterval satisfying the construction conditions,
         and therefore they satisfy       
         \begin{eqnarray*}
                 d( I_j , I_{j'}) & \ge C | I_j|\\
                               & \ge C |I_{j'}|
                \end{eqnarray*}

 \end{proof}

 \subsection{Consequences of this Whitney type decomposition}
 Now, given $f \in L^1[-\pi,\pi])$, $f \geq 0$ and $\lambda>0$ consider the set
  \begin{equation*}
     G= \{x: f^\ast(x) = \sup \frac{ 1 }{ |I| }\int_I f(t) \, dt > \lambda \},
  \end{equation*}
then $G$ is an open set, and consider the Whitney type decomposition for $G$, $\big\{ I_j\big\}$, as above, i.e. $G = \cup_{j=1}^\infty I_j$.
   We take its average
             \begin{equation*}
                     \frac{1  }{|I_j|  }   \int_{I_j} f \, dx
                     \leq \frac{1 }{|I_j| } \int_{\widetilde{I}_j} f \, dt
             \end{equation*}
             where $\widetilde{I}_j$ has been obtained from $I_j$ by expanding it 2 times, i.e. $|\widetilde{I}_j| \geq 2 |I_j|. $
             If  $I_j$ is one of the central subintervals of the original decomposition  $J_{k,2},$ one of the adjacent
             subintervals is also included. 
             If  $I_j$ is not the central subintervals, we choose another subinterval, adjacent to the central one.
             In this way we have
                     $ \widetilde{I}_j =  I_j  \cup I $
                     and therefore $|\widetilde{I}_j| = 2 |I_j|$
              \begin{align*}
                \frac{1  }{|I_j| } \int_{I_j} f \, dx
                   & \leq  \frac{1  }{|I_j| } \int_{\widetilde{I}_j} f \, dt
                =  \frac{ 2 }{ 2|I_j| }  
                         \int_{\widetilde{I}_j} f \, dt
                \\
              & \leq \frac{ 1 }{ |\widetilde{I}_j |} \int_{\widetilde{I}_j} f \, dt 
                 \leq 2 \lambda.
                     \end{align*}

                     Regardless of $I_j$, the $\widetilde{I}_j$ we have defined
                     has points from the complement of $G$, and therefore
                     its integral is less or equal than $2 \lambda$.
                     In other words,
                     \begin{equation*}
                     \int_{I_j} f \, dx  \leq 2 \lambda |I_j|
                      \qquad \text{ if and only if }
                            \qquad 
                     \int_{\widetilde{I}_j} f \, dx \leq 2 \lambda |\widetilde{I}_j|
                     \end{equation*}
                     given that $\widetilde{I}_j$ contains at least one point
                     from $F= G^c = \displaystyle \left\{ 
                                 f^\ast \leq \lambda
                                      \right\} .$\\
                                      
Suppose now that we have a Poisson kernel and a function $f$
such that 
          $f\geq 0, $ 
           $\mbox{supp}{(f)}\subset J_k$ 
          and $f$ is \emph{bad},
by which we mean that $f$ is infinite in a dense subset (i.e., for all $n$  there exists $E_n \subset J_k$
 such that  $|f|>n $).  Even though $f$ is bad, we know that for some $k_o$
 \begin{equation*}\label{desi1}
         \frac{1  }{|J_{k_o}|  } \int_{J_{k_o}} f \, dx  \leq \lambda
         \qquad 
         \text{and}
         \qquad 
         d( u, J_{k_o}) \geq c |J_{k_o}|
 \end{equation*}
 \begin{align}
         \int_{J_{k_o}} \frac{ \varepsilon }{ \varepsilon^2+ (u -v)^2 } 
         \ f(v) \ dv & \leq C(c)     \lambda \label{marcin-key}
         \\
         \int_{J_{k_o}} \frac{ \varepsilon }{ \varepsilon^2+ (u -v)^2 } 
                   \ f(v) \ dv & \leq    
            C 
         \int_{J_{k_o}} \frac{ \varepsilon }{ \varepsilon^2+ (u -v)^2 } 
            \ \phi(v) \ dv
 \end{align}
{where $\phi(v)\leq C(c) \lambda$ and $|u-v|> c |J_{k_o}|$.}
 \begin{proof}
         We will prove (\ref{desi1}). Take $v\in J_{k_o} $ such that $ d(u, J_{k_o}) > C |J_{k_o}|,$
         \begin{align*}
         \int_{J_{k_o}} \frac{ \varepsilon }{ \varepsilon^2 + (u -v)^2 } 
                   \ f(v) \ dv 
                  & \leq       
      \int_{J_{k_o}} \frac{ \varepsilon }{ \varepsilon^2 + c^2|J_{k_o}^2|}
                   \ f(v) \ dv 
                   \qquad \text{ if } |u-v|<C |J_{k_o}|
                   \\
                   & \leq
             \frac{ \varepsilon }{ \varepsilon^2 + c^2 |J_{k_o}^2|} 
             \int_{J_{k_o}} f(v) \, dv
                    =  
        \frac{ \varepsilon |J_{k_o}|}{ \varepsilon^2 + c^2|J_{k_o}^2|}
        \ \frac{ 1 }{ |J_{k_o}|} 
             \int_{J_{k_o}} f(v) \, dv
                   \\
                   & \leq
        \frac{ \varepsilon|J_{k_o}|}{ \varepsilon^2 + c^2|J_{k_o}^2|}
          \lambda
                  =  
        \frac{ \varepsilon \lambda}
          { \varepsilon^2 + \frac{ c^2 }{4} \left( 2|J_{k_o}|^2 \right)^2}
           | J_{k_o}| 
                   \\
                   & \leq
        \frac{ \varepsilon \lambda}{ \varepsilon^2 + \frac{ c^2 }{4} \left( |J_{k_o}|^2 +u\right)^2}
         \end{align*}
         
\begin{center}   
\setlength{\unitlength}{4144sp}%
\begingroup\makeatletter\ifx\SetFigFont\undefined%
\gdef\SetFigFont#1#2#3#4#5{%
  \reset@font\fontsize{#1}{#2pt}%
  \fontfamily{#3}\fontseries{#4}\fontshape{#5}%
  \selectfont}%
\fi\endgroup%
\begin{picture}(3849,780)(-11,-391)
{\color[rgb]{0,0,0}\thinlines
\put(2026,-61){\circle*{66}}
}%
{\color[rgb]{0,0,0}\put(901, 74){\line( 0,-1){270}}
}%
{\color[rgb]{0,0,0}\put(1801, 74){\line( 0,-1){270}}
}%
{\color[rgb]{0,0,0}\put(2701, 74){\line( 0,-1){270}}
}%
{\color[rgb]{0,0,0}\put(901,-106){\line( 1, 0){900}}
}%
{\color[rgb]{0,0,0}\put(901,-16){\line( 1, 0){1800}}
}%
{\color[rgb]{0,0,0}\put(  1,-61){\line( 1, 0){3825}}
}%
\put(1261,-376){\makebox(0,0)[lb]{\smash{{\SetFigFont{14}{16.8}{\familydefault}{\mddefault}{\updefault}{\color[rgb]{0,0,0}$|J_{k_0}|$}%
}}}}
\put(1660,109){\makebox(0,0)[lb]{\smash{{\SetFigFont{14}{16.8}{\familydefault}{\mddefault}{\updefault}{\color[rgb]{0,0,0}$2\cdot |J_{k_0}|$}%
}}}}
\put(2026,-241){\makebox(0,0)[lb]{\smash{{\SetFigFont{14}{16.8}{\familydefault}{\mddefault}{\updefault}{\color[rgb]{0,0,0}$v$}%
}}}}
\end{picture}%
\end{center}
Therefore this must hold for the average on $v$ 
         \begin{align*}
         \int_{J_{k_o}} \frac{ \varepsilon }{ \varepsilon^2 + (u -v)^2 } 
                   \ f(v) \ dv 
                  & \leq       
                   \varepsilon \, \lambda\,  
                | J_{k_o}|  \frac{ 1 }{|J_{k_o}} |
                   \int_{J_{k_o}}
        \frac{ dv }
       { \varepsilon^2 + \frac{ c^2 }{4} \left( |I_{k_o}| +v\right)^2}
                   \\
                   & =
                   \int_{J_{k_o}}
        \frac{ \varepsilon }
       { \varepsilon^2 + \frac{ c^2 }{4} \left( |I_{k_o}|+v\right)^2}
       \ \lambda  \ dv
                   = C \, \lambda
         \end{align*}
 \end{proof}
 \eqref{marcin-key}  was the key point in Marcinkiewicz's proof.

\subsection{On the Marcinkiewicz function}

Let $\mathcal{F}(x)$ be the Marcinkiewicz function, defined in (\ref{MarcinFunct}) and  
             $\big\{ I_k\big\} $
             the covering of $G$ satisfying the properties
             of Lemma~\ref{lemacub}
and  consider 
\begin{equation} \label{ser}
    \sum_{k=1}^\infty  
         \big(
                \int_{I_k} \frac{ |I_k| }{ (x-y)^2 }  f_k \ dy
                \big)
    \end{equation}
  where $f_k = \displaystyle \frac{ 1 }{ |I_k | } \int_{I_k} f(t)\ dt$.
  Then it is not difficult to see that if $x\in F= G^c$,
  then (\ref{ser}) is finite and
      \begin{equation*}
      \begin{split}
\int_F \big[
    \sum_{k=1}^\infty  
         \big(
                \int_{I_k} \frac{ |I_k| }{ (x-y)^2 }  f_k \ dy
                \big)
\big]
    & = 
     \sum_{k=1}^\infty \int_{I_k}
         \big[
                 f_k \big( |I_k| \int \frac{ 1 }{ (x-y)^2 } \,dx\big)
               \big] dy \\
        & \leq
                C \sum_{k=1}^\infty  \int_{I_k } f_k \, dy \\
        & \leq
                \lambda C |G|
      \end{split}
      \end{equation*}
where the last inequality follows from the construction of the covering
and the first reflects the fact that 
  $ |I_k| \int \frac{ 1 }{ (x-y)^2 } \ dx \leq C $
  because $| I_k | \int \frac{ 1 }{ x^2 }  \, dx \leq C $ for 
  $|x| \ge |I_k| .$ Furthermore
    \begin{equation*}
    \begin{split}
            \Big| \{ \mathcal{F} (x) > \lambda\}\Big|
            &\leq  \frac{ C }{ \lambda } \sum_k \big(\int f_k (y) \, dy\big)
            \\
            & \leq \frac{C}{ \lambda }  \lambda |G |
            \\
            & \leq C \frac{ 1 }{ \lambda }  \int f(x)\, dx
    \end{split}
    \end{equation*}
  In other words, the Marcinkiewicz function $\mathcal{F}$ is $(1,1)$-weak.
          \begin{proposition}
                  Let $\mu$ be a measure such that $\mu\in A_1$ 
                  and $ d\mu = g \, dx $, where $g$ is the density
                  of the measure $\mu$. Then
                  \begin{equation*}
                     \int_G g \, dx \leq   \frac{C}{\lambda  } \int f\, g\ dx
                  \end{equation*}
          \end{proposition}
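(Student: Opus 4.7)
The plan is to bound $\int_G g\,dx$, where $G=\{f^\ast>\lambda\}$, by first covering $G$ with intervals on which $f$ has large average, and then converting back to $\int fg$ through the pointwise characterization $g^\ast(x)\le C g(x)$ a.e.\ of $A_1$ stated in the preliminaries. Although the inequality to be proved is formally the other description of the $A_1$ class recorded earlier, the point is to derive it from the pointwise version by the real variable covering machinery of this section.

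First I would construct the covering. Since $G$ is open and every $x\in G$ satisfies $f^\ast(x)>\lambda$, there exists an open interval $I_x\ni x$ with $\frac{1}{|I_x|}\int_{I_x}f\,dy>\lambda$. A one-dimensional Vitali-type selection then produces a pairwise disjoint subfamily $\{J_\alpha\}\subset\{I_x\}$ whose triples $\{3J_\alpha\}$ still cover $G$. One might hope to use instead the Whitney family $\{I_j\}$ of Lemma~\ref{lemacub}, but those intervals satisfy only $\int_{I_j}f\le 2\lambda|I_j|$, which is the opposite inequality to the one needed, so the Vitali selection is essential here.

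With the covering in place, the pointwise $A_1$ bound gives, for a.e.\ $y\in 3J_\alpha$,
\[
\frac{1}{|3J_\alpha|}\int_{3J_\alpha}g\,dx \leq g^\ast(y) \leq C g(y),
\]
and taking the essential infimum over $y\in J_\alpha\subset 3J_\alpha$ yields $\int_{3J_\alpha}g\,dx\le 3C|J_\alpha|\,m_\alpha$, where $m_\alpha$ denotes the essential infimum of $g$ on $J_\alpha$. The selection condition $\int_{J_\alpha}f\,dy>\lambda|J_\alpha|$ then gives
\[
\lambda|J_\alpha|\,m_\alpha \;\le\; m_\alpha \int_{J_\alpha} f\,dy \;\le\; \int_{J_\alpha} fg\,dy,
\]
so $\int_{3J_\alpha}g\,dx\le (3C/\lambda)\int_{J_\alpha}fg\,dy$. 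Summing over the disjoint family,
\[
\int_G g\,dx \;\leq\; \sum_\alpha \int_{3J_\alpha}g\,dx \;\leq\; \frac{3C}{\lambda}\sum_\alpha \int_{J_\alpha}fg\,dy \;\leq\; \frac{3C}{\lambda}\int fg\,dx,
\]
which is the inequality claimed.

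The main obstacle is the covering step: one must verify that the Vitali selection works cleanly in the periodic setting $[-\pi,\pi]$ so that the tripled intervals $3J_\alpha$ have controlled overlap and the final summation produces the correct absolute constant. In one dimension this is short, but it is the only nontrivial ingredient---once the disjoint selection is fixed, everything else is a single application of the pointwise $A_1$ estimate combined with elementary bookkeeping.
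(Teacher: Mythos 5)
Your argument is correct and is the standard derivation of the weighted weak type $(1,1)$ inequality from the pointwise $A_1$ condition $g^\ast \le C g$: a Vitali selection of disjoint intervals $J_\alpha$ on which $f$ has average exceeding $\lambda$, the bound $\int_{3J_\alpha} g \le 3C|J_\alpha|\,\mathrm{ess\,inf}_{J_\alpha} g$, and the exchange $\lambda |J_\alpha| m_\alpha \le \int_{J_\alpha} fg$. (The only point to tidy is the dilation factor: for an infinite family the usual Vitali lemma gives covering by $5J_\alpha$ rather than $3J_\alpha$, but this only changes the constant, and you flag this yourself.) The paper, however, takes a genuinely different route: it keeps the Whitney type decomposition $\{I_k\}$ of $G$ from Lemma~3.2 --- whose intervals satisfy the \emph{upper} bound $\int_{I_k} f \le 2\lambda |I_k|$ that you correctly observe is useless for your argument --- and instead integrates the Marcinkiewicz function $\mathcal{F}$ against $d\mu$ over $F = G^c$. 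By Fubini and the comparability $(x-y)^2 \sim \bigl(|x-y| + |I_k|\bigr)^2$ for $x \in F$, $y \in I_k$ (a consequence of $d(I_k,F)=|I_k|$), the inner integral $\int_F |I_k|\,g(x)\,(x-y)^{-2}\,dx$ is dominated by $g^\ast(y) \le Cg(y)$, giving $\int_F \mathcal{F}\,d\mu \le C\int fg\,dx$, after which Chebyshev's inequality and the weak $(1,1)$ analysis of $\mathcal{F}$ developed just before the proposition yield the bound on $\mu(G)$. Your proof is more self-contained and elementary, and does not depend on the (admittedly sketchy) Marcinkiewicz function estimates; the paper's version buys integration with the rest of the section, since the same kernel comparison $\frac{|I_k|}{(x-y)^2}$ versus the maximal function is exactly the mechanism reused later in the weighted estimate for $\sigma_\alpha^\ast$. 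Either approach legitimately establishes the proposition, which, as you note, is essentially the equivalence of the two descriptions of $A_1$ recorded in the preliminaries.
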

\begin{proof}
   Let $\mathcal{F}(x)$ be the Marcinkiewicz function
   \begin{equation*}
   \begin{split}
           \int \mathcal{F}(x) \, d\mu(x)
           &\leq \int_F \, d\mu(x) 
            \int \frac{ 1  }{ (x-y)^2  } |I_k| f_k (y) \, dy
            \\
          & \leq \int_{I_k}  f(y)
                \big[
                        \frac{ d\mu(x) }{[ (y-x) + |I_k|]^2}  |I_k|
                       \big] dy
          \\
          & \leq \mathcal{F} g(y) \leq C \, g(y) \leq C \int f(y) g(y) \, dy
   \end{split}
   \end{equation*}
since  $(x-y)^2\sim \big[(y-x)+|I_k|\big]^2$.
   Therefore using Chebyshev's inequality
   \begin{equation*}
   \Big(\mu\{x: \mathcal{F}(x) > \lambda\}\Big) \leq 
   \frac{ C }{ \lambda } \int_F \mathcal{F}(x) \ d\mu(x)
   \end{equation*}
   over $F$. Then by Marcinkiewicz's theorem
   \begin{equation*}
          \mu(G)  \leq \frac{ C }{ \lambda } \int f(y)g(y)\, dy.
   \end{equation*}
          \end{proof}

   \begin{theorem}[surprising result]
                   \begin{equation}
                           \int   \frac{|f (x+y)|  }{ y^2  } \ dy
                             < \infty  \qquad \text{a.e. in } F
                   \end{equation}
           \end{theorem}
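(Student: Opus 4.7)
The plan is to reduce the claim to a Marcinkiewicz-type a.e.\ finiteness statement by combining the Whitney covering of Lemma~\ref{lemacub} with the bounds from the preceding subsection. After the change of variable $u = x+y$, the integrand becomes $|f(u)|/(u-x)^2$, and I would split the $u$-range according to $\mathbb{R} = F \cup G$, where $G = \{f^* > \lambda\}$ and $F = G^c$.

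For the contribution from $u \in G$, apply the Whitney decomposition $G = \bigcup_k I_k$. For $x \in F$ and $u \in I_k$, the geometric property $d(I_k, F) \sim |I_k|$ guarantees that $|u-x| \sim d(x, I_k) + |I_k|$, and the average bound $f_k \le C\lambda$ from Lemma~\ref{lemacub} yields
\[
\int_G \frac{|f(u)|}{(u-x)^2}\,du \;\le\; C \sum_k \frac{|I_k|\, f_k}{(d(x, I_k) + |I_k|)^2}.
\]
I would then deduce a.e.\ finiteness of this series on $F$ either by direct comparison with the Marcinkiewicz function $\mathcal{F}(x)$, or by integrating against an $A_1$ weight $\omega$ and invoking the preceding Proposition, using the elementary estimate $\int_F dx/(d(x,I_k)+|I_k|)^2 \le C/|I_k|$ together with Tonelli to produce the desired weighted control.

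For the contribution from $u \in F$, where $|f(u)| \le \lambda$ a.e.\ by Lebesgue differentiation and the very definition of $F$, the kernel is genuinely singular at $u = x$, and this is the ``surprising'' part of the statement. Here I would exploit that almost every $x \in F$ is a density point of $F$, together with the weak-type $(1,1)$ bound for $\mathcal{F}$ already established: a layer-cake style estimate coupled with the density condition bounds the contribution from $u \in F$ near $u = x$ at almost every point of $F$.

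The main obstacle is precisely this near-diagonal contribution from $u \in F$: the non-integrability of $1/y^2$ at the origin means the claim can hold only because the fine structure of $F$, encoded through the Whitney covering of $G$ and the pointwise bound $|f| \le \lambda$ on $F$, suppresses the singularity at almost every $x \in F$. Tying together the weak-$(1,1)$ bound for $\mathcal{F}$, the Whitney geometry of $G$, and the Lebesgue density property of $F$ is what delivers the result.
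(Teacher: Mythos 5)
Your treatment of the contribution from $u\in G$ is essentially the paper's own argument: decompose $G=\bigcup_k J_k$ into its Whitney-type intervals, use the average bound $\frac{1}{|J_k|}\int_{J_k}f\le C\lambda$, and integrate in $x$ over a slightly shrunk copy of $F$ (the paper uses $\widetilde F=\bigl(\bigcup(1+\varepsilon)J_k\bigr)^c$) so that Tonelli together with the elementary estimate $\int_{|x-y|>\varepsilon|J_k|}(x-y)^{-2}\,dx\le(\varepsilon|J_k|)^{-1}$ gives an integrable bound on $\widetilde F$, hence a.e.\ finiteness on $F$. Up to bookkeeping, that half of your proposal is the same route as the paper's.

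The genuine gap is your second step, the near-diagonal contribution from $u\in F$. That term is $\int_{F}|f(u)|(u-x)^{-2}\,du$, and no density-point or layer-cake argument can make it finite: if $f$ is bounded below on a neighborhood of $x$ inside $F$ (take $f=\chi_{[0,1]}$ and $\lambda=2$, so that $G=\emptyset$ and $F$ is everything), the integral diverges at every such $x$, i.e.\ on a set of positive measure. Lebesgue density of $F$ at $x$ works against you here --- it makes $F$ occupy almost all of small neighborhoods of $x$, reinforcing the $1/y^2$ singularity rather than suppressing it --- and the weak-$(1,1)$ bound for $\mathcal F$ controls only the lengths $|I_k|$ of the contiguous intervals, not the values of $f$ on $F$. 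The statement is only true, and the paper only proves it, for $f_G=f\chi_{_G}$, i.e.\ for a function vanishing on $F$, exactly as in Marcinkiewicz's original Lemma where $\varphi\equiv 0$ on the perfect set $P$; the density-point mechanism you invoke is what makes the $G$-contribution finite near density points of $F$ (because $G$ is sparse there), and it has no bearing on an integrand supported on $F$ itself. You should drop the $F$-part entirely and restrict the claim to $f\chi_{_G}$, as the paper's proof in fact does from its first line.
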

           
\begin{proof}
Let $f_G = f\vert_{_G}$
           and $ G = \bigcup_k J_{_k}$, 
           where $J_{_k}$ denotes maximal intervals, 
           and $\bar{J_k} \cap F \neq \emptyset$.
           We shall see that 
           \begin{equation}
                   \int  \frac{1  }{(x-y)^2} \ f_{_G}(y) \ dy 
                   = 
                   \int \frac{ f_{_G} (x+y)}{ y^2  } \ dy 
                   < \infty \quad  \text {a.e. }
           \end{equation}

           \bigskip
           We shall show next that 
           \begin{equation*}
 \sum_k \int_{J_k}   f_{_G} (y) \ dy \int_F \frac{ 1 }{ (x-y)^2 }  \ dx
           \end{equation*}
           
          Consider $\widetilde{J_k} = (1+\varepsilon) J_k$,
          a dilation by a factor of $1+\varepsilon$
          from the center of $I_k$, therefore
          \begin{equation*}
                  |\widetilde{J_k}| = |(1+\varepsilon) J_k |
                  = (1+\varepsilon) |J_k|
          \end{equation*}
       If $x\in \widetilde{F} = 
                     \Big( \bigcup (1+\varepsilon) J_k  \Big)^c
                     \subset  F$
       then
       \begin{equation*}
               \int_{\widetilde{F}}  \frac{ 1 }{ (x-y)^2 } \ dx
               \leq
               \int_{|x-y|> \varepsilon |J_k|}
                \limits
                  \frac{ 1 }{ (x-y)^2 } \ dy 
                  \leq
                  \frac{ 1 }{ \varepsilon  |J_k|} 
       \end{equation*}
       Then
       \begin{equation*}
               \int_{J_k}   f_{_G} (y) 
               \Bigg(
                      \int_{\widetilde{F}} \frac{ dx }{ (x-y)^2 }  
               \Bigg)\ dy
                      \ \leq \
                  \frac{ 1 }{ \varepsilon  |J_k|} 
                  \int_{J_k}  f_{_G} (y) dy 
                  \ \leq \frac{ 1 }{ \varepsilon } \lambda
       \end{equation*}

 We observe that when $x$ is outside the intervals $I_k$,
         that is  whenever $x \in \widetilde{F}$, the integral
         $ \int_{I_k} \frac{ 1 }{ (x-y)^2 }  f_{_G} (y) dy$
         is bounded from below and from above. 
         If $x\notin I_k$ 
                  $y_k \in I_k$  and if we note that
                  $d(x, I_k) \geq \varepsilon |I_k|$ 
         \begin{equation*}
                 \begin{split}
                 \int_{I_k}  \frac{ 1 }{ (x-y)^2 } f_{_G} (y) \ dy
                &\sim
                   \frac{ 1 }{ (x - y_k)^2 } 
                   \int_{I_k} f_{_G} (y) \ dy
                 \\
                 & = \frac{ |I_k| }{ (x-y_k)^2  } 
                 \frac{ 1  }{  |I_k|  } 
                 \int_{I_k}  f_{_G}(y)\ dy 
                 \\
                 & \leq 
                 \underbrace{
                 \frac{ 1  }{ |I_k| }  \int_{I_k} f_{_G} (y) \ dy
                            }_{= \lambda} \
                             \int_{I_k} \frac{ 1 }{ (x-y)^2 }  \ dy
                 \\
                 & =
                  \lambda 
                   \int_{I_k} \frac{ 1 }{ (x-y)^2 } \ dy
                 \end{split}
         \end{equation*}
         Summing up over $k$ we obtain 
         \begin{equation*}
                 \sum_k \int_{I_k} 
                    \Bigg(
                         \int_{\widetilde{F}} \frac{ dx }{ (x-y)^2 } 
                    \Bigg) \ dy
                    \leq
                   C \lambda \int_{\widetilde{G}}  \frac{ 1 }{ (x-y)^2 } \ dy.\\
         \end{equation*}
\end{proof}

     Now  we shift our attention on a more pure version.
         If $G = \bigcup I_k$, $F=G^c$, $\delta_k= c |I_k|$
         and defining $\phi$ by
         \begin{align}
                         \phi (x)  &=
                    \left\{  
                    \begin{array}{lcl}
                    \displaystyle
                    c \, |I_k|&  \qquad& \text{ if }  x\in I_k   \\[3.5mm]
                          0    &  \qquad& \text{ if }  x\in F     
                    \end{array}
                    \right.
                    \\
                    \intertext{we have}
                    \phi (x)  
                    &=
                    \sum_k c |I_k| \ \chi_{_{I_k}}(x)
         \end{align}

                 \begin{lemma}
                 \begin{equation}
                         \int_{F'} \frac{ 1 }{ (x-y)^2} \ \phi(y) \ dy  
                         \ < \ \infty \qquad \text{ a.e. on } F
                 \end{equation}
         \end{lemma}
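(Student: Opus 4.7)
The plan is to reduce the lemma to the Fubini--Tonelli computation already carried out in the excerpt for the Marcinkiewicz function $\mathcal{F}$. The decisive observation is that $\phi(y) = c|I_k|$ on $I_k$ plays the same role as $d(y,F)$ does in the definition of $\mathcal{F}$: by property~(ii) of Lemma~\ref{lemacub}, $d(I_k, F) = |I_k|$, so for every $y \in I_k$ one has $d(y, F) \geq |I_k|$, whence $\phi(y) \leq c\, d(y, F)$ on $G$. Thus the present statement is essentially the weak-$(1,1)$ type estimate for $\mathcal{F}$ in disguise.

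First I would integrate in $x$ over $F$ and swap integration order by Tonelli, noting that $\phi$ is supported on $G = \bigcup_k I_k$:
\[
\int_F \left( \int \frac{\phi(y)}{(x-y)^2}\, dy \right) dx = \sum_k c\,|I_k| \int_{I_k} \left( \int_F \frac{dx}{(x-y)^2} \right) dy.
\]
For fixed $y \in I_k$, every $x \in F$ satisfies $|x-y| \geq d(y,F) \geq |I_k|$, so
\[
\int_F \frac{dx}{(x-y)^2} \leq \int_{|x-y| \geq |I_k|} \frac{dx}{(x-y)^2} = \frac{2}{|I_k|}.
\]
Substituting this bound and summing yields
\[
\int_F \left( \int \frac{\phi(y)}{(x-y)^2}\, dy \right) dx \leq \sum_k c\,|I_k| \cdot |I_k| \cdot \frac{2}{|I_k|} = 2c\,|G| < \infty,
\]
since $G \subset [-\pi,\pi]$. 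A nonnegative function with finite integral over $F$ is finite almost everywhere on $F$, so $\int \phi(y)/(x-y)^2\, dy$ is finite for a.e.\ $x \in F$, which is the claim. Restricting the $y$-integration to $F'$ (a subset of $F$, or a dilation of $F$ in the spirit of the $\widetilde F$ used in the preceding theorem) only improves the bound.

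The only mildly technical point is the distance estimate $d(y, F) \geq |I_k|$ for $y \in I_k$; this is immediate from $d(I_k, F) = \inf_{y \in I_k} d(y, F) = |I_k|$, which is precisely property~(ii) of Lemma~\ref{lemacub}. With that in hand, the Whitney structure collapses the double integral to $\sum_k |I_k| = |G|$, and the fact that $\phi$ grows linearly (not quadratically) in $|I_k|$ is exactly what makes the computation converge, mirroring the authors' earlier argument for $\mathcal{F}$.
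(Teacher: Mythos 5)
Your proof is correct and follows essentially the same route as the paper: apply Tonelli, bound the inner integral $\int \frac{dx}{(x-y)^2}$ by $C/|I_k|$ using the separation between $I_k$ and the set over which $x$ ranges, and sum $\sum_k |I_k| = |G| < \infty$. The only (harmless, and in fact slightly cleaner) difference is that you exploit property (ii) of Lemma~\ref{lemacub}, $d(I_k,F)=|I_k|$, to integrate $x$ directly over $F$ and conclude finiteness a.e.\ on $F$ in one step, whereas the paper integrates over the $\varepsilon$-dilated complement $F'$ and gets the separation $|x-y|\gtrsim \varepsilon|I_k|$ from the dilation instead.
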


         \begin{proof}
               Let $ F'= (G')^c$, where 
                   $ G' = \bigcup_k (1+\varepsilon)I_k$.
                   Denoting by $I'_k = (1+\varepsilon)I_k$
                   \begin{equation}
                      \begin{split}
                         \int_{F'} \int \frac{ 1 }{ (x-y)^2} \phi(y)\ dy\ dx
                         & = 
                         \sum_k \int_{I'_k} 
                                 \sum_k c_k |I_k|
                                  \int_{F'} \frac{ 1 }{ (x-y)^2 }\ dx \ dy \\
                         & = 
                         \sum_k \int_{I_k} c\ dy  \\
                         & = 
                         C'' |G|
                      \end{split}
                   \end{equation}
                  where we have used that  
                   ${ (x-y)^2 }  \geq (1+\varepsilon) |I_k| $
                   if $x\in F'$.  Therefore
                   \begin{equation}
                           \begin{split}
                           \int_{F'}
                           \Bigg( \int \frac{ 1 }{ (x-y)^2 } \ \phi(y)\ dy 
                           \Bigg) dx \ &< \infty
                           \\
                           \int \frac{ 1 }{ (x-y)^2 } \ \phi(y)\ dy 
                           \ & < \infty \quad \text{ a.e. on } F
                           \end{split}
                   \end{equation}

         \end{proof}
         
         We plan to discuss the following chain of inequalities
 \begin{equation} \label{doble-dis}
                        \lambda    \leq \frac{ 1 }{ |I_k|}  \int_{I_k} f(u) \ du \leq  2 \lambda    
\end{equation}
     For $f\geq0$, $u\geq 0$ and $v\geq 0$ 
\begin{align*}
        (1-r^2)
        \int_0^{\tfrac{ \pi }{ 2 } }
        &   \frac{ f(u+x) }{ (1-r)^2  (u+v)^2}  
        \int_0^{\pi/2}
           \frac{ f(v+x) }{ (1-r)^2  (u-v)^2}   \ dv \ du
      \\    
      &\leq  
        (1-r^2)
        \int_0^{\tfrac{ \pi }{ 2 } }
           \frac{ f(u+x) }{ (1-r)^2  u^2}  
        \int_0^{\tfrac{ \pi }{ 2 } }
     \frac{ f(v+x) }{ (1-r)^2  (u-v)^2}   \ dv \ du \quad \text{since $v\geq0$}
     \\
      &\leq  
        (1-r^2)
        \int\limits_x^{x+\tfrac{ \pi }{ 2 } }
        \frac{ f(\bar{u}) }{ (1-r)^2  (\bar{u}-x)^2}  
        \int_0^{\tfrac{ \pi }{ 2 } }
      \frac{ f(\bar{v}) }{ (1-r)^2  (\bar{u}-\bar{v})^2}   \ d\bar{v} \ d\bar{u} 
        \\
      &\leq  
        (1-r^2)
        \int_{\pi}^{-\pi}
           \frac{ f(u) }{ (1-r)^2  (u-x)^2}  
        \int_{-\pi}^{\pi}
           \frac{ f(v) }{ (1-r)^2  (u-v)^2}   \ dv \ du
\intertext{  } 
\end{align*}
Then
\begin{align*}
        (1-r^2)
        \int_{-\pi}^{\pi}
        &   \frac{ f_k(u) }{ (1-r)  (u-x)^2}  
        \sum_{\substack{j=k'\\ xx= k''}}
        \int_{-\pi}^{\pi}
     \frac{ f_j(\cdot) }{ (1-r)  (u-v)^2}   \ du \ dv 
     \\
     &\leq 
        (1-r^2)
        \int_{-\pi}^{\pi}   
        \frac{ f_k(u) }{ (1-r)^2  (u-x)^2}  
        \sum_{\substack{j=k'\\ xx= k''}}
     \frac{ f_j(\cdot) }{ (1-r)^2  }   \ dv \ du 
     \\
     &\leq 
        \int_{-\pi}^{\pi}   
        \frac{ f_k(u) }{ (u-x)^2  }  C \lambda |I_k| \;  \ du 
\end{align*}
Let
\begin{align*}
       f_k &= \left\{  
       \begin{array}{lcl}
       \displaystyle  f 
         &  \qquad& \text{on $I_k$} \  \\[3mm]
           0            &       &  \text{elsewhere.}
       \end{array}
       \right.
\intertext{If $ x \in I_j$, then }
      \varphi_k (x)  
         &=  \int_{\chi_{_k}}  
               \frac{ 1-r }{ (x-y)^2+(1-r)^2  } f_k(y) \ dy
               \\
         &\leq  \int_{\chi_{_k}}  
               \frac{ 1+r }{ (1-r)^2 + c(x-y)^2 }\ \mu_k \ dy
\end{align*}

\begin{align*}
        \int  \frac{1-r  }{(1-r)^2 + (x-y)^2  }  \ f_k(y) \ dy 
        &\leq
        \big(
                \frac{1-r  }{(1-r)^2 + (x-y)^2  }   \ dy 
              \big)  \ \mu_k
               \\
  &\leq  \big(
                \frac{1-r  }{(1-r)^2 + C (x-y)^2  }   \ dy 
       \big)\ \mu_k
\end{align*}

\begin{align*}
        \int  \frac{ \varepsilon^{p-1} }{ \varepsilon^p + (x-y)^p  }  \ dy
         &=      
        \int  \frac{ \varepsilon^{p-1} } {\varepsilon^p + |y|^p} \ dy
       =      
        \int  \frac{ \varepsilon^{p-1} } 
              { 1 + \big|\frac{ y }{\varepsilon  } \big|^p } \ dy
              \\
         &=      
        \int  \frac{ \varepsilon^{-1} } 
              { 1 + \big| \frac{ y }{\varepsilon  }\big|^p} \ dy
      =      
     \int \frac{ ds }{ 1 + |s|^p },  \qquad \frac{ y }{ \varepsilon } = s.
\end{align*}

\subsection{Power Series}
Now, consider the power series, $\sum_{k=0}^\infty a_k x^k $,
for $x$ a complex variable, which is convergent for $|x|<1$
and let $ f(x) = \sum_{k=0}^\infty a_k x^k $. Then $f$ is analytic.

  Denoting the partial sum of  $\sum_{k=0}^\infty a_k$ by $S_n = \sum_{k=0}^n a_k $, then, as $S_k - S_{k-1} =a_k$, then
  \begin{equation}
        \begin{split}
               f(x) & = \sum_{k=0}^\infty  \left(  S_k - S_{k-1}\right) x^k  = \sum_{k=0}^\infty S_k x^k - \sum_{k=1}^\infty S_{k-1} x^{k} \\
                    & = \sum_{k=0}^\infty S_k x^k - \sum S_{k} x^{k+1} = \sum_{k=0}^\infty S_k (x^{k+1} - x^k)  = ( 1-x ) \sum_{k=0}^\infty S_k x^k
        \end{split}
\end{equation}
Suppose now  $x \in B_1 = \big\{ x  :  |x| < 1\big\}  $, then
\begin{align}
        \frac{f(x)  }{ 1 -x   }  &= \sum_{k=0}^\infty S_k x^k \\
 \intertext{  Writing $ x^k = r^k e^{ik\theta }$,   }   
 \frac{  f(x)  }{ 1 -x   }  &= \sum_{k=0}^\infty \left(S_k r^k \right) e^{ik\theta}\\
\intertext{If $1<p<2$, then using the Hausdorff-Young inequality we have}
\left[  \sum_{k=0}^\infty \big| S_k r^k \big|^{q}
\right]^{1/q}
        &\leq
        C_p \left(
        \int_0^{2\pi} \big|  
                         \frac{ f(r e^{i\theta}) }{1-re^{i\theta}  \big|
                            }
           d\theta
           \right)^{1/p}\\
  \nonumber& \quad \quad \times      \big[ \int_0^{2\pi} \frac{ 1 }{ 1-re^{i\theta} } 
            \big( \int_0^{2\pi} 
                         |P(r, \theta-\psi )| f(\psi)\ d\psi
                       \big)^p
               \big]^{1/p}
\end{align}

Now, 
\begin{align*}
    \sum_{k=0}^\infty a_k e^{i k s } e^{ik\theta } r^k 
        &= f \big( r e^{i(\theta+ s)}\big)
        \\
        f(r, \theta+s) & = \sum_{k=0}^\infty (1-z) S_k z^k 
        \qquad \text{ where } z=e^{is}r  \text{ and }S_n =\sum_{k=0}^n a_k e^{ik\theta}.
\end{align*}
Now $\theta$ has been fixed and $s$ varies, therefore
\begin{equation*}
       \sum_{k=0}^\infty S_k z^k =  \frac{1  }{1-z}   f(r, \theta+s).
\end{equation*}
By the Hausdorff-Young inequality which  bounds  the $L^p$ norm 
the Fourier coefficients  for $ 1 < p < 2 $ and $2 < q < \infty$
\begin{align}
        \big( \sum_{k=0}^\infty r^k q |S_k|^q \big)^{1/q} \label{bound}
        &\leq C_p 
         \big(\int_0^{2\pi} \frac{ 1 }{ |1 - re^{is}|^p} f(r,\theta+s)^p 
        \big)^{1/p}.
        \big)
\intertext{Now, since we have the equivalence 
        $ \big| 1 - r e^{is}\big| \sim \big|(1-r)^2 + s^2\big|^{1/2} $
        in the sense it is bounded by \eqref{bound} multiplied by a constant } 
\nonumber          &\leq \big(
          \int\limits_0^{2\pi} \frac{ 1 }{ \big[ (1-r)^2+s^2\big]^{p/2} }
          \big( \int P (r, \theta+s - u ) p(u) \ du\big)^p
                      \big)^{1/p}
      \\
 \nonumber      &=\big(
          \int\limits_0^{2\pi} \frac{ 1 }{ \big[ (1-r)^2+(s-\theta)^2\big]^{p/2} }
          \big( \int P (r, \theta+s - u ) p(u) \ du \big)^p
                      \big)^{1/p}
\end{align}

 Using a Whitney type decomposition
       with $f = \sum_{k=0}^\infty f_k$ we can rewrite the previous equation as
\begin{equation*}
      \big(   \int\limits_0^{2\pi}  
             \frac{ 1 }{ \big[ (1-r)^2+(s-\theta)^2\big]^{p/2}}
             \big(
                     P (r, s-u) f_k(u) \sum_j 
                     \big( \int P (r, s-v) f_j(v) \ dv \big)\big)
                 \big)^{p-1}
\end{equation*}
{where $1-r = \varepsilon$ and where for now we shall work with 
  a single $k$ to bound $f_k$ and then we shall consider all $k$}
\begin{equation*}
        \label{cases}
\big( 
         \int_0^{2\pi}  
             \frac{ 1 }{ \big[ \varepsilon^2+(s-\theta)^2\big]^{p/2}}
            \big(
                     P (r, s-u) f_k(u) \sum_j 
                     \big(\int P (r, s-v) f_j(v) \ dv \big)
                  \big)
         \big)^{p-1}
\end{equation*}

We consider the different possible cases separately:
\begin{enumerate}[leftmargin=1.7cm, rightmargin=0cm, label=\underline{Case \alph*:} ]
        \item 
                We start with case when the $j$ are not adjacent to $I_k$.
  \begin{align*}
     \eqref{cases} 
     &\leq
     \big( 
             \int_0^{2\pi} 
             \frac{ 1 }{ \big[ \varepsilon^2+(s-\theta)^2\big]^{p/2} }
             \big(  \int  P (r, s-v)  \lambda \chi_{_k} C \lambda^{p-1}\big)
             \big)
\intertext{considering $s$ not in $I_k$}
    &\leq
  \big( 
             \int_0^{2\pi} 
             \frac{ 1 }{ \big[ \varepsilon^2+(s-\theta)^2\big]^{p/2}}
            \big(  \int  P (r, s-v)   C \lambda \lambda^{p-1} \big)
            \big)
  \end{align*}
  \item
          In the case that $j$ touch the adjacent,
          the measures are comparable and we have
  \begin{align*}
     \eqref{cases} 
     &\leq
    \big( 
             \int_0^{2\pi} 
             \frac{ 1 }{ \big[ \varepsilon^2+(s-\theta)^2\big]^{p/2} }
        \big(  \int  P (r, s-u) C\lambda 
                        \big(\lambda |I_k|\big)^{p-1} \big)
 \big)
 \big)
     \intertext{In the case that $s$ not in $I_k$}
    &\leq
\big( 
             \int_0^{2\pi} 
             \frac{ 1 }{ \big[ \varepsilon^2+(s-\theta)^2\big]^{p/2}}
             \big(  \int_{I_k}  P (r, s-u) \lambda \chi_{_k} (u) C
                        \big(\lambda |I_k|\big)^{p-1} \big)
                        \big)
     \\
    &\leq
    \big( 
             \int_0^{2\pi} 
             \frac{ 1 }{ \big[ \varepsilon^2+(s-\theta)^2\big]^{p/2} } 
             \underbrace
             { \big(  \int_{I_k}  P (r, s-u) \lambda^p \chi_{_k} (u) C
                    \big)}_{C\lambda^p}
  \big)
  \end{align*}
\end{enumerate}

\subsubsection{Abel Sums analog}

Consider the series $(1-r)\sum_{\nu=0}^\infty r^\nu (S_\nu(f,x))^2$, then taking $r=1- \frac{1}{n}$ then
\begin{eqnarray*}
(1-r)\sum_{\nu=0}^\infty r^\nu (S_\nu(f,x))^2 &=& (1-(1- \frac{1}{n}))\sum_{\nu=0}^\infty (1-\frac{1}{\nu}) ^\nu (S_\nu(f,x))^2\\
&=& \frac{1}{n}\sum_{\nu=0}^\infty (1-\frac{1}{\nu}) ^\nu (S_\nu(f,x))^2\\
&\geq& e^{-1} \frac{1}{n}\sum_{\nu=0}^\infty  (S_\nu(f,x))^2.
\end{eqnarray*}
Therefore,
$$ (S^\ast f)(x) \leq e^{1/2} \sup_{0<r<1} [ (1-r) (1-r)\sum_{\nu=0}^\infty r^\nu (S_\nu(f,x))^2]^{1/2}.$$
The key will be to study $ (1-r)\sum_{\nu=0}^\infty r^\nu (S_\nu(f,x))^2$. We will construct a kernel
\begin{equation}
D(r, x,y) = \sum_{\nu=0}^\infty r^\nu D_\nu(x) D_\nu(y),
\end{equation}
where $D_\nu$ is the Dirichlet kernel 
$$ D_\nu (x) = \frac{\sin(\nu +1/2)x}{2 \sin (x/2)}.$$
Thus,
$$ D(r, x,y) = \frac{(1-r)[(1-r)^2 + 2 r (2+\cos x + \cos y)]}{4(1-2 r \cos(x-y) +r^2)(1-2 r \cos(x+y) +r^2)}.$$
For $-\pi+\varepsilon < (x+y) <\pi-\varepsilon$ and $-\pi+\varepsilon < (x-y) <\pi-\varepsilon$ we get,
$$ D(r, x,y)  \leq \frac{9(1-r)}{[(1-r)^2 +rC_3(x-y)^2][(1-r)^2 +rC_3(x+y)^2]}.$$
We will assume $f=0$ if $|x| > \pi/2 - \varepsilon /2.$ and we will estimate
$$(1-r)\sum_{\nu=0}^\infty r^\nu (S_\nu(f,x))^2 =\frac{1-r}{\pi} \int_{-\pi}^\pi \int_{-\pi}^\pi  f(x+u)g(x+v) D(x,r,u,v) du dv,$$
We move from the periodic case to the continuous case. The above integral is dominated by
$$ (1-r)^2 \int_{-\infty}^\infty \int_{-\infty}^\infty  \frac{9f(x+u)g(x+v) } {[(1-r)^2 +rc(u-v)^2][(1-r)^2 +r c(u+v)^2]} du dv.$$
We shall consider the integral 
\begin{equation}\label{case1}
(1-r)^2 \int_{0}^\infty \int_{0}^\infty  \frac{f(x+u)g(x+v) } {[(1-r)^2 +rc(u-v)^2][(1-r)^2 +r c(u+v)^2]} du dv,
\end{equation}
and the analogous integrals on the regions $(-\infty,0)\times (-\infty,0)$, $(-\infty,0)\times (0,\infty)$ and $(0,\infty)\times (-\infty, 0)$, that can be argue as the preceding one using symmetry arguments.\\
Let us study the integral (\ref{case1}), which is typical
\begin{eqnarray*}
&&(1-r)^2 \int_{0}^\infty \int_{0}^\infty  \frac{f(x+u)f(x+v) } {[(1-r)^2 +r c(u-v)^2][(1-r)^2 +r c(u+v)^2]} du dv\\
&\leq& (1-r)^2 \int_{0}^\infty \int_{0}^\infty  \frac{f(x+u)f(x+v) } {[(1-r)^2 +r c(u-v)^2][(1-r)^2 +r cv^2]} du dv
\end{eqnarray*}
$f$ has already been decomposed as $f= \sum_{k=0}^\infty f_k$ where 
$$f_k = \begin{cases} f &\mbox{on}\; I_k \\
0  &\mbox{otherwise.}
\end{cases},$$
$\{I_k\}$ are such that
$$ \frac{1}{|I_k|} \int_{I_k} f(t) dt \leq C \lambda.$$
The above integral (\ref{case1}) is dominated by
$$ C (1-r)^2 \int_{-\infty}^\infty \int_{-\infty}^\infty  \frac{f(x+u) } {[(1-r)+rc(u-v)^2]}\frac{f(x+v)}{[r v^2]} du dv,$$
and after a change of variables, we have,
$$ \frac{c}{r} \int_{-\infty}^\infty  \int_{-\infty}^\infty \frac{f(v)}{(v-x)^2} \frac{(1-r)^2 f(u)}{(1-r)} du dv.$$
The intervals $I_k$ have been constructed so that
\begin{enumerate}
\item [i)] If $I_j$ and $I_k$ are adjacent then
$$\frac{1}{2} |I_j| \leq |I_k| \leq 2 |I_j|.$$
\item [ii)] $d(I_j,I_k) \geq \frac{1}{2} |I_j|,$ and $d(I_j,I_k) \geq \frac{1}{2} |I_k|$
\end{enumerate}
For $x \in F= \bigcup_\nu I_\nu$ decompose the double integral as the sum
$$ \sum_{i,j} \int_{I_i}\int_{I_j} (1-r)^2 \int_{I_i} \frac{f(v)}{(1-r)^2+c(v-x)^2}  dv \int_{I_j} \frac{f(u)}{(1-r)^2+c(u-v)^2} du $$
\begin{enumerate}
\item [i)] adjacent $I_j$: there are at most 2 of those intervals,
$$ (1-r)^2 \int_{I_i} \frac{f(v)}{(1-r)^2+c(v-x)^2}  dv \int_{I_j} \frac{f(u)}{(1-r)^2+c(u-v)^2} du  \leq \int_{I_i} \frac{f(v)}{(v-x)^2}  dv  \times C \lambda |I_i| \leq C \lambda \mathcal{F}(x).$$
\item [ii)] non adjacent $I_j$:
$$ (1-r)^2 \int_{I_i} \frac{f(v)}{(1-r)^2+c(v-x)^2}  dv \times \sum_{j} C \int_{I_j} \frac{\lambda}{(1-r)^2+(v-x)^2)} \leq C \lambda^2.$$
\end{enumerate}

\end{document}